\newtheorem{theorem}{Theorem}[section]
\newtheorem{proposition}[theorem]{Proposition}
\date{}
\title{Exponential moments of the argument of the Riemann zeta function on the critical line }
\author{
        Joseph \textsc{Najnudel}     \footnote{\texttt{joseph.najnudel@bristol.ac.uk}}
       } %\footnotemark
\begin{document}
\maketitle
\begin{abstract}
In this article, we give, under the Riemann hypothesis, an upper bound for the exponential moments of the imaginary part of the logarithm of the Riemann zeta function on the critical line. Our result, which gives information on the fluctuations of the distribution of the zeros of $\zeta$, has the same accuracy as the result obtained by Soundararajan in \cite{Sound} for the moments of $|\zeta|$. 

%Using different techniques, it constitutes a follow-up of the work by Arguin, Belius, Bourgade on the one hand and Paquette, Zeitouni on the other hand. They recently treated respectively the first and second leading orders, in the $CUE$ case.
\end{abstract}

\section{Introduction}
The behavior of the Riemann zeta function on the critical line has been intensively studied, in particular in relation with the Riemann hypothesis. A natural question concerns the order of magnitude of the moments of $\zeta$: 
$$\mu_k(T) := \frac{1}{T} \int_0^T |\zeta (1/2 + it)|^{2k} dt = \mathbb{E} [ |\zeta (1/2 + iUT)|^{2k}],$$
where $k$ is a positive real number and $U$ is a uniform random variable in $[0,1]$. 
It is believed that the order of magnitude of $\mu_k(T)$ is $(\log T)^{k^2}$ for fixed $k$ and $T$ tending to 
infinity. More precisely, it is conjectured that there exists $C_k > 0$ such that 
$$\mu_k(T) \sim_{T \rightarrow \infty} C_k (\log T)^{k^2}.$$
An explicit expression of $C_k$ has been predicting by Keating and Snaith \cite{bib:KSn}, using an expected analogy between $\zeta$ and the characteristic polynomials of random unitary matrices. The conjecture has been only proven for $k =1$ by Hardy and Littlewood and for $k = 2$ by Ingham (see Chapter VII of \cite{Tit}). 

The weaker conjecture $\mu_k(T) = T^{o(1)}$ for fixed $k > 0$ and $T \rightarrow \infty$ is equivalent to the Lindel\"of hypothesis which states that $|\zeta(1/2 + it) | \leq t^{o(1)}$ when $t$ goes to infinity. The Lindel\"of hypothesis is a still open conjecture which can be deduced from the Riemann hypothesis. 

Under the Riemann hypothesis, it is known that $(\log T)^{k^2}$ is the right order of magnitude for 
$\mu_k(T)$. In \cite{Sound}, Soundararajan proves that 
$$\mu_k(T) = (\log T)^{k^2 + o(1)}$$
for fixed $k > 0$ and $T$ tending to infinity. 
In \cite{Harper}, Harper improves this result by showing that 
$$\mu_k(T) \ll_k (\log T)^{k^2},$$
for all $k > 0$ and $T$ large enough, the notation 
$A \ll_x B$ meaning that there exists $C > 0$ depending only on $x$ such that $|A| \leq C B$. 
In \cite{HRS19}, Heap, Radziwi\l\l \, and Soundararajan prove this bound unconditionally, when $0 \leq k \leq 2$. 

On the other hand, the lower bound 
$$\mu_k(T) \gg_k (\log T)^{k^2}$$
has been proven by Ramachandra (\cite{Ra78, Ra95}) and Heath-Brown \cite{HB81}, assuming the Riemann hypothesis, and, for $k \geq 1$,  by Radziwi\l\l \, and Soundararajan \cite{RS13}, unconditionally. Other lower bounds on moments of $L$-functions have also been found by Rudnick and Soundararajan \cite{RS05}. 

The moment $\mu_k(T)$ can be written as follows: 

$$\mu_k(T) = \mathbb{E} [ \exp ( 2k \Re \log \zeta(1/2 + iUT))].$$
Here $\log \zeta$ denotes the unique determination of the logarithm which is well-defined and countinous everywhere 
except at the left of the zeros and the pole of $\zeta$, and which is real on the interval $(1, \infty)$. 

It is now natural to also look at similar moments written in terms of the imaginary part of $\log \zeta$: 
$$\nu_k (T) =  \mathbb{E} [ \exp ( 2k \Im \log \zeta(1/2 + iUT))].$$
Note that $\Im \log \zeta$ is directly related to the fluctuations of the distribution of the zeros of $\zeta$ with respect to their "expected distribution": we have
$$N(t) = \frac{t}{2 \pi} \log  \frac{t}{2 \pi e} + \frac{1}{\pi} \Im \log \zeta(1/2 + it) + \mathcal{O}(1),$$
where $N(t)$ is the number of zeros of $\zeta$ with imaginary part  between $0$ and $t$. 

In the present article, we prove, conditionally on the Riemann hypothesis, an upper bound on $\nu_k(T)$ with the same accuracy as the upper bound on $\mu_k(T)$ obtained by Soundararajan in \cite{Sound}.

The general strategy is similar, by integrating estimates on the tail of the distribution of $\Im \log \zeta$, obtained 
by using bounds on moments of sums on primes coming from the logarithm of the Euler product of $\zeta$. The main difference with the paper by Soundararajan \cite{Sound} is that we do  not directly use an upper bound of $\Im \log \zeta$ which is similar to the upper bound of $\log |\zeta|$ given in his Proposition. Instead, in order to
estimate $\Im \log \zeta$ in terms of sums on primes, we intensively use the fact that 
$\Im \log \zeta(1/2 + it)$ cannot decrease too fast when $t$ increases, because of the link between $\Im \log \zeta$ and the distribution of the zeros of $\zeta$. 
In Titchmarsh \cite{Tit}, Theorem 14.21, one gets an estimate which is similar to the upper bound of the Proposition in \cite{Sound}. It may be possible to use this estimate in order to get another proof of our main result, more similar to the proof given in \cite{Sound}. 

We expect that such bound is not optimal and that the exact order of magnitude of the moment is $(\log T)^{k^2}$, which is obtained when we do the Gaussian approximation of $\Im \log \zeta$ corresponding to Selberg's central limit theorem. It would be interesting to know if the strategy of Harper \cite{Harper} can be adapted to $ \Im \log \zeta$, in order to get a sharp upper bound on its exponential moments: Titchmarsh \cite{Tit}, Theorem 14.21 may be useful for this purpose. We do not know if the techniques used in order to get lower bounds of moments of $|\zeta|$ can be adapted to the exponential moments of $\Im \log \zeta$: such adaptation, if it is possible, is not straightforward, since the usual Dirichlet series of $\zeta$ cannot be directly used in order to estimate $\Im \log \zeta$, because this value is determined by the value of $\zeta$ only modulo $2 \pi$, whereas $\Re \log \zeta$ is completely determined by $\zeta$.

The precise statement of our main result is the following: 
\begin{theorem}
Under the Riemann hypothesis, for all $k \in \mathbb{R}$, $\varepsilon> 0$, 
$$\mathbb{E} [\exp(2 k \Im \log \zeta(1/2 + iTU))] \ll_{k, \varepsilon} (\log T)^{k^2 + \varepsilon},$$
where $U$ is a uniform variable on $[0,1]$. 
\end{theorem}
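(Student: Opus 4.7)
My plan is to follow the general scheme of Soundararajan~\cite{Sound} by translating the exponential moment into a tail estimate for $\Im\log\zeta$ and then bounding that tail via high moments of a short Dirichlet polynomial over primes. Writing $X = \Im\log\zeta(1/2 + iTU)$ and $\sigma = \mathrm{sgn}(k)$, the elementary identity
$$\mathbb{E}[e^{2kX}] \leq 1 + 2|k| \int_0^\infty e^{2|k|V}\, \mathbb{P}(\sigma X \geq V)\, dV$$
reduces the theorem to the sub-Gaussian tail bound
$$\mathbb{P}(\sigma X \geq V) \ll_\varepsilon \exp\!\left( -\frac{V^2}{(1 + \varepsilon)\log\log T}\right)$$
uniformly for $0 \leq V \leq (\log\log T)(\log\log\log T)$, the leftover range being absorbed by the conditional RH bound $|\Im\log\zeta(1/2 + it)| \ll \log t/\log\log t$. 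Integrated against $e^{2|k|V}$, this tail produces the announced $(\log T)^{k^2 + \varepsilon}$.

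To generate such tails I would approximate $\Im\log\zeta$ by a prime sum using Selberg's explicit formula: with a smooth truncation at $X = T^{\theta}$ for a small parameter $\theta = \theta(V)$,
$$\Im\log\zeta(1/2 + it) = -\sum_{p \leq X} \frac{\sin(t \log p)}{p^{1/2}} - \frac{1}{2}\sum_{p \leq X^{1/2}} \frac{\sin(2t \log p)}{p} + R(t, X),$$
where, under RH, the remainder $R(t,X)$ is controlled by a local zero-count near $t$. The central obstacle relative to Soundararajan's treatment of $\log|\zeta|$ is visible here: he has a deterministic pointwise inequality (his Proposition) bounding $\log|\zeta|$ from above by a short prime sum up to an additive $O(1/\theta)$, whereas the zero contribution to $\Im\log\zeta$ can oscillate in sign, so no analogous one-sided bound is available.

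The crucial new input, already advertised in the introduction, is the monotonicity of the zero-counting function $N(t)$. From
$$N(t) = \frac{t}{2\pi}\log\frac{t}{2\pi e} + \frac{1}{\pi}\Im\log\zeta(1/2+it) + O(1)$$
and the non-decreasingness of $N$, one obtains the one-sided Lipschitz estimate
$$\Im\log\zeta(1/2 + i(t+h)) \geq \Im\log\zeta(1/2 + it) - \frac{h}{2}\log T - O(1) \qquad (0 \leq h \leq 1),$$
so that a spike of $\Im\log\zeta$ at $t$ persists on an interval of length $\gg 1/\log T$; a matching upper-side estimate follows from a local Selberg bound on $N(t+h) - N(t)$. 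I would use this rigidity to replace the pointwise event $\{\sigma X \geq V\}$ by an event about the short-window average of $\Im\log\zeta$, to which the Selberg approximation above can be applied on average rather than pointwise.

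The moment method then completes the tail estimate. Quasi-orthogonality of $\{\sin(t\log p)\}_p$ on $[0,T]$ yields, for $t = TU$ and integers $q$ satisfying $2q\log X \leq (1-o(1))\log T$, Gaussian-like moment bounds
$$\mathbb{E}\!\left[\Bigl(\sum_{p \leq X}\frac{\sin(TU \log p)}{p^{1/2}}\Bigr)^{2q}\right] \leq \frac{(2q)!}{q!\,2^q}\left(\tfrac12\log\log X + O(1)\right)^q(1 + o(1)),$$
and Markov's inequality with $q \asymp V^2/\log\log T$ and $\theta \asymp (\log T)^{-\varepsilon}$ (so that $\log\log X = (1 + O(\varepsilon))\log\log T$) produces the required Gaussian tail. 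The hardest step will be the previous one: the scale $1/\log T$ on which monotonicity is effective is much finer than the Selberg scale $1/\log X$, and reconciling these scales while keeping the local zero-count errors in the averaging window to $o(V)$ is precisely what forces the $\varepsilon$ loss in $(\log T)^{k^2 + \varepsilon}$ relative to Harper's~\cite{Harper} sharp $(\log T)^{k^2}$ for $\mu_k$.
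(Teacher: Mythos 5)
Your overall scheme — reduce to a tail bound, exploit the monotonicity of $N(t)$ for a one-sided Lipschitz estimate on $\Im\log\zeta$, pass to a short-window average, express that average as a prime sum, and finish by the moment method of Soundararajan's Lemma 3 — is exactly the paper's plan. However, there is a concrete gap at the very step you yourself flag as hardest: the claim that ``a matching upper-side estimate follows from a local Selberg bound on $N(t+h)-N(t)$'' does not hold. Under RH the pointwise upper bound on $N(t+h)-N(t)$ carries an error of size $\log t/\log\log t$, not $O(1)$, so no two-sided Lipschitz inequality at the relevant accuracy is available; the paper states explicitly that the absence of a pointwise upper bound analogous to Soundararajan's Proposition is \emph{the} obstacle. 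Consequently, when you average over a window around a spike of $\Im\log\zeta$, the one-sided Lipschitz bound only controls one half of the window, and nothing a priori prevents $\Im\log\zeta$ from being catastrophically negative on the other half. The paper's resolution (Propositions 2.2--2.4) is an iterative conditioning: one conditions on $\Im\log\zeta$ not being below $-2V$ at a dyadic net of nearby points, and absorbs the failure probabilities into a bound at level $2V$, then $4V$, etc., paying a $(1+\log\log T)^r$ combinatorial cost per step. This mechanism, which is where the $\varepsilon$ loss partly originates, is missing from your outline.

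A second, independent gap is in your handling of large $V$. You claim the range $V>(\log\log T)(\log\log\log T)$ is ``absorbed by the conditional RH bound $|\Im\log\zeta|\ll\log t/\log\log t$.'' That bound only forces $\mathbb{P}(\sigma X\geq V)=0$ for $V\gg\log T/\log\log T$; in the intermediate range $(\log\log T)(\log\log\log T) \lesssim V \lesssim \log T/\log\log T$ the Gaussian tail alone, evaluated at the cutoff and multiplied by $\int e^{2|k|V}\,dV$ up to $\log T/\log\log T$, gives a quantity like $\exp\bigl(c|k|\log T/\log\log T-(\log\log T)(\log\log\log T)^2\bigr)$, which is far larger than $(\log T)^{k^2+\varepsilon}$. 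The paper closes this by establishing a second tail of shape $e^{-c_\varepsilon V\log V}$ (coming from the contributions $S_2$, $S_3$ and the large-$V$ regime of $S_1$ in the moment computation), which, integrated against $e^{2|k|V}$, contributes $O_{k,\varepsilon}(1)$. You will need an analogous estimate; the RH pointwise bound is not enough. Finally, a minor point: a hard cutoff at $X=T^\theta$ in the explicit formula leaves a remainder of order $\log t/\log\log t$, so you should instead use a mollified version (as in Tsang's Lemma 5, which is what the paper's Proposition 3.1 relies on) for which the error after averaging is genuinely $O(1)$.
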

The proof of this result is divided into two main parts. In the first part, we bound the tail of the distribution of $\Im \log \zeta(1/2 + iTU)$ in terms of the tail of an averaged version of this random variable. In the second part, we 
show that this averaged version is close to a sum on primes, whose tail is estimated from bounds on its moments. Combining this estimate with the results of the first part gives a proof of the main theorem. 

\section{Comparison of $\Im \log \zeta$ with an averaged version}

The imaginary part of $\log \zeta$ varies in a smooth and well-controlled way on the critical line when there are no zeros, and has positive jumps of $\pi$ when there is a zero. We deduce that it cannot decrease too fast. 
More precisely, the following holds:

\begin{proposition}
For $2 \leq t_1 \leq t_2$, we have 
$$\Im \log \zeta(1/2 + it_2) \geq \Im \log \zeta(1/2 + i t_1) - (t_2 - t_1) \log t_2 + \mathcal{O}(1). $$
\end{proposition}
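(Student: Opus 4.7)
The strategy is to invoke the Riemann--von Mangoldt formula, which directly links $\Im\log\zeta(1/2+it)$ to the non-decreasing zero-counting function $N(t)$: uniformly for $t\ge 2$,
$$N(t)=\frac{t}{2\pi}\log\frac{t}{2\pi e}+\frac{7}{8}+\frac{1}{\pi}\Im\log\zeta(1/2+it)+O(1/t).$$
Rearranging and absorbing the bounded constant into an error term gives the identity
$$\Im\log\zeta(1/2+it)=\pi N(t)-\varphi(t)+O(1),$$
where $\varphi(t):=\tfrac{t}{2}\log(t/(2\pi e))$ is an explicit smooth main term. All the smoothness / jump structure mentioned in the paragraph preceding the proposition is encoded in this single formula: the only discontinuities on the right are the upward unit jumps of $N$.

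Taking the difference of this identity at $t_2$ and $t_1$ yields
$$\Im\log\zeta(1/2+it_2)-\Im\log\zeta(1/2+it_1)=\pi\bigl(N(t_2)-N(t_1)\bigr)-\bigl(\varphi(t_2)-\varphi(t_1)\bigr)+O(1).$$
Since $N$ is non-decreasing, the first term on the right is non-negative and may be dropped; this is the rigorous form of the heuristic that $\Im\log\zeta$ can only \emph{decrease} via the smooth part of its variation. It remains to upper bound the smooth difference. As $\varphi'(t)=\tfrac12\log(t/(2\pi))$, the mean value theorem gives, for $t_1\ge 2\pi$,
$$\varphi(t_2)-\varphi(t_1)\le \frac{t_2-t_1}{2}\log\frac{t_2}{2\pi}\le\frac{t_2-t_1}{2}\log t_2,$$
while for the bounded range $t_1\in[2,2\pi)$ one handles $\varphi(2\pi)-\varphi(t_1)$ as an $O(1)$ correction. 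Combining,
$$\Im\log\zeta(1/2+it_2)-\Im\log\zeta(1/2+it_1)\ge -\frac{t_2-t_1}{2}\log t_2+O(1)\ge -(t_2-t_1)\log t_2+O(1),$$
the last step using $\log t_2\ge 0$.

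I do not expect a serious obstacle here: the argument is essentially a one-line consequence of the Riemann--von Mangoldt formula together with the monotonicity of $N$. The only bookkeeping points are the convention for $\Im\log\zeta$ at the ordinates of zeros (resolved by the branch chosen in the introduction) and the uniformity of the $O(1/t)$ remainder for $t\ge 2$; neither changes the conclusion by more than an additive $O(1)$.
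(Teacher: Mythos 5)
Your proof is correct and takes essentially the same route as the paper: the paper's one-line proof cites Theorem 9.3 of Titchmarsh, which is precisely the Riemann--von Mangoldt formula you invoke, and the argument (drop the non-negative $\pi(N(t_2)-N(t_1))$, then bound the smooth main term by the mean value theorem) is exactly what is spelled out in the cited Proposition 4.1 of \cite{bib:Naj}.
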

\begin{proof}
Is is an easy consequence of Theorem 9.3 of Titchmarsh \cite{Tit}: for example, see Proposition 4.1 of \cite{bib:Naj} for details. 
\end{proof}

We will now define some averaging of $\Im \log \zeta$ around points of the critical line. From the previous proposition, if  
$\Im \log \zeta$ is large at some point $1/2 + it_0$ of the critical line, then it remains large on some segment $[1/2 + it_0,1/2 + i(t_0 + \delta)]$ which tends to also give a large value of an average of $\Im \log \zeta(1/2 + it)$ for $t$ around $t_0$. Our precise way of averaging is the following. 
We fix a function $\varphi$ satisfying the following properties: $\varphi$ is real, nonnegative, even, dominated by any negative power at infinity, and its Fourier transform is compactly supported, takes values in $[0,1]$, is even and equal to $1$ at zero. The Fourier transform is normalized as follows: 
$$\widehat{\varphi}(\lambda) = \int_{-\infty}^{\infty} \varphi(x) e^{-i \lambda x} dx.$$
For $H> 0$ we define an averaged version of $\Im \log \zeta$  as follows: 
$$I(\tau,H) :=  \int_{-\infty}^{\infty} 
\Im \log \zeta (1/2 +  i (\tau + t H^{-1}))\varphi (t) dt.$$
The following result holds:
\begin{proposition}
Let $\varepsilon \in (0,1/2)$. Then, there exist $a, K > 1$, depending only on $\varphi$ and $\varepsilon$, and satisfying the following property. 
For $T > 100$, $\tau \in [\sqrt{T},T]$, $ K < V < \log T$, $H := K V^{-1} \log T $, the inequalities 
$$\Im \log \zeta(1/2 + i \tau)  \geq V,$$
$$\Im \log \zeta(1/2 + i (\tau - e^r H^{-1}))  \geq - 2  V$$
for all integers $r$ between $0$ and $\log \log T$, 
together imply 
$$I (\tau + a H^{-1}, H) \geq (1-\varepsilon) V.$$ 
Similarly, the inequalities 
$$\Im \log \zeta(1/2 + i \tau)  \leq -V,$$
$$\Im \log \zeta(1/2 + i (\tau + e^r H^{-1}))  \leq 2 V$$
for all integers $r$ between $0$ and $\log \log T$, 
together imply 
$$I (\tau - a H^{-1}, H) \leq -(1-\varepsilon) V.$$ 
\end{proposition}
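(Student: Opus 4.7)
The plan is to convert the pointwise hypotheses on $\Im \log \zeta$ into pointwise lower bounds for the integrand of $I$ uniformly in the shift variable, and then integrate against $\varphi$. After the substitution $w = u - a$,
$$
I(\tau + aH^{-1}, H) = \int_{-\infty}^\infty \Im \log \zeta\bigl(1/2 + i(\tau + uH^{-1})\bigr)\, \varphi(w)\, dw, \qquad u := w + a,
$$
so it suffices to bound the integrand from below in three $u$-regimes: $u \geq 0$, $-\log T \leq u < 0$, and $|u| > \log T$.

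For $u \geq 0$ I invoke Proposition 2.1 with $t_1 = \tau$, $t_2 = \tau + uH^{-1}$; using $uH^{-1}\log t_2 \leq 2uV/K$ on the range under consideration, and absorbing the factor $2$ into $K$, this gives
$$
\Im \log \zeta\bigl(1/2 + i(\tau + uH^{-1})\bigr) \geq V(1 - u/K) + O(1).
$$
For $-\log T \leq u < 0$, I take the smallest integer $r \geq 0$ with $e^r \geq \max(1,|u|)$, so $r \leq \lceil \log \log T \rceil$ and the hypothesis $\Im \log \zeta(1/2 + i(\tau - e^r H^{-1})) \geq -2V$ applies; Proposition 2.1 between $t_1 = \tau - e^r H^{-1}$ and $t_2 = \tau - |u|H^{-1}$ yields
$$
\Im \log \zeta\bigl(1/2 + i(\tau + uH^{-1})\bigr) \geq -V\Bigl(2 + \frac{e(1+|u|)}{K}\Bigr) + O(1).
$$
For $|u| > \log T$ I fall back on the trivial estimate $|\Im \log \zeta(1/2 + it)| = O(\log t) = O(\log T)$, valid on the range containing all relevant points.

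Plugging these three bounds into the integral and using that $\varphi$ is nonnegative, even, of unit integral, and super-polynomially decaying, the $u \geq 0$ range contributes, via the identity $\int_{-a}^\infty (w+a)\varphi(w)\,dw = a(1-\delta_a) + \int_a^\infty w\varphi(w)\,dw$ with $\delta_a := \int_{|w|>a}\varphi \to 0$, at least $V(1 - a/K) - V\cdot o_a(1) + O(1)$; the $-\log T \leq u < 0$ range (where $w < -a$, and $1 + e|u|/K$ is integrable against $\varphi(w)$ beyond $w = -a$) contributes at least $-V \cdot o_a(1)$; and the $|u| > \log T$ range contributes $O(\log T \cdot (\log T)^{-N}) = o(1)$ for any $N$. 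Summing,
$$
I(\tau + aH^{-1}, H) \geq V(1 - a/K) - V\eta(a) - C_\varphi,
$$
with $\eta(a) \to 0$ as $a \to \infty$ and $C_\varphi$ a constant depending only on $\varphi$. Given $\varepsilon \in (0, 1/2)$, I first fix $a$ large enough that $\eta(a) \leq \varepsilon/3$, then $K$ large enough that $a/K \leq \varepsilon/3$ and $C_\varphi/V \leq \varepsilon/3$ uniformly for $V \geq K$; this delivers $I \geq (1-\varepsilon)V$.

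The symmetric conclusion for $-V$ follows from the same argument applied to $-\Im \log \zeta$, with left and right exchanged throughout in the use of Proposition 2.1. The main obstacle is the ordered bookkeeping of the parameter choices: $a$ must be fixed first, to make the tail mass of $\varphi$ negligible, and only afterwards may $K$ be enlarged to kill both the linear slope $a/K$ and the additive constant $C_\varphi$; one must also verify uniformity of the implied constants in Proposition 2.1 across all exponential scales $r \in \{0,\dots,\lceil \log \log T \rceil\}$, and ensure that the polynomial-in-$\log T$ trivial bound is entirely absorbed by the super-polynomial decay of $\varphi$.
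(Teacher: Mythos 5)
Your proof is correct and takes essentially the same route as the paper: Proposition~2.1 is used from $\tau$ rightwards to get the main positive contribution, then from the hypothesis points $\tau - e^{r}H^{-1}$ to control the near-left range, and the trivial $O(\log T)$ bound together with the superpolynomial decay of $\varphi$ handles the far tail, with the parameters fixed in the same order ($a$ first, then $K$). The only small slip is the boundary of your middle regime: for $|u| \in (e^{\lfloor \log\log T \rfloor}, \log T]$ the smallest admissible $r$ with $e^{r} \geq |u|$ is $\lfloor \log\log T \rfloor + 1$, which lies outside the hypothesis range; the cutoff should be $e^{\lfloor \log\log T \rfloor}$ (as in the paper), with the trivial-bound regime covering the remaining sliver.
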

\begin{proof}
First, we observe that $H > 1$ since $K > 1$ and $V < \log T$. We deduce that for all the values of $s$ such that 
$\Im \log \zeta (1/2 + is)$ is explicitly written in the proposition, $ \sqrt{T} - \log T \leq s \leq T + \log T$: in particular 
$s > 2$ since $T > 100$, and we can apply the previous proposition to compare these values of $\Im \log \zeta$.

If $\Im \log \zeta (1/2 + i \tau) \geq V$, then 
for all $t \geq 0$, 
\begin{align*}
& \Im \log \zeta (1/2 + i (\tau + t H^{-1}) ) \geq V - t H^{-1} \log (\tau + tH^{-1}) 
+ \mathcal{O}(1) \\ & 
\geq V - t K^{-1} V (\log T)^{-1} \log (\tau +  t H^{-1}) + \mathcal{O}(1).
\end{align*}
Since $H > 1$ and $\tau \leq T$, 
$$\Im \log \zeta (1/2 + i (\tau + t H^{-1}) )
\geq V ( 1 - t K^{-1} (\log T)^{-1} \log(T + t)) + \mathcal{O}(1).$$
We have 
$$\log (T + t) = \log T + \log ( 1 + t/T) \leq \log T + \log (1+t),$$
and then, integrating against $\varphi(t-a)$ from $0$ to $\infty$, 
\begin{align*}
& \int_0^{\infty} \Im \log \zeta (1/2 + i (\tau + t H^{-1}) ) \varphi(t-a) dt
\\ & \geq V  \left[ \int_{0}^{\infty} \varphi(t-a) dt  -   K^{-1} 
\int_{0}^{\infty} t \varphi(t-a) dt   \right. \\ & \left. - K^{-1}  (\log T)^{-1}  \int_{0}^{\infty} t \log (1+t) \varphi(t-a) dt \right] + 
\mathcal{O}(1),
\end{align*}
Since $\varphi$ is integrable against $t$ and $t \log (1+t)$ (it is rapidly decaying at infinity), since $ V K^{-1} > 1$ and then $\mathcal{O}(1) = \mathcal{O}( V K^{-1})$, 
and since the integral of $\varphi$ on $\mathbb{R}$ is $\widehat{\varphi}(0) = 1$, we get
$$\int_0^{\infty} \Im \log \zeta (1/2 + i (\tau + t H^{-1}) ) \varphi(t-a) dt
\geq V ( 1 -  \mathcal{O}_{a, \varphi}( K^{-1})).$$
We deduce 
$$\int_0^{\infty} \Im \log \zeta (1/2 + i (\tau + t H^{-1}) ) \varphi(t-a) dt
\geq V (1 - \varepsilon/2),$$
when $K$ is large enough, depending on $a$, $\varepsilon$ and $\varphi$. 
Now, let us consider the same integral between $-\infty$ and $0$. 
For $0 \leq r \leq \log \log T$ integer and $u \in [0, e^{r} - e^{r-1}]$ for $r \geq 1$, $u \in [0,1]$ for $r = 0$, 
$$\Im \log \zeta (1/2 + i (\tau - (e^r - u) H^{-1})) 
- \Im \log \zeta (1/2 + i (\tau - e^r  H^{-1})) \geq  - u H^{-1} \log T + \mathcal{O}(1),$$
and then 
$$\Im \log \zeta (1/2 + i (\tau - (e^r - u) H^{-1})) 
\geq - 2 V - u K^{-1} V + \mathcal{O}(1)$$
Since $u \leq e^r$, $K > 1$, and $ 1 + e^r - u \geq e^{r-1}$, 
$$\Im \log \zeta (1/2 + i (\tau - (e^r - u) H^{-1})) 
\geq - 2  V  - e (1 + e^r - u) V + \mathcal{O}(1),$$
and then, for all $t \in [-  e^{\lfloor \log \log T \rfloor}, 0]$, 
$$\Im \log \zeta (1/2 + i (\tau + t H^{-1}))  
\geq - \mathcal{O}( V (1+|t|)),$$
after taking into account the fact that $V > 1$. 
If  $K$  is  large enough depending on $\varphi$, this estimate remains true for $t <  -  e^{\lfloor \log \log T \rfloor}$, since by Titchmarsh \cite{Tit}, Theorem 9.4, and by the fact that $|t| \geq e^{\log \log T -1 } \gg  \log T$,
\begin{align*}
|\Im \log \zeta (1/2 + i (\tau + t H^{-1}))   | &  \ll
\log ( 2 + \tau + |t| H^{-1})
  \ll \log (T + |t|) \\ & = \log T + \log ( 1 + |t|/T)  \leq \log T + |t|/T \ll |t|,
\end{align*}
whereas $V > K > 1$. 
Integrating against $\varphi(t-a)$, we get 
$$\int_{-\infty}^0  \Im \log \zeta (1/2 + i (\tau + t H^{-1}) ) \varphi(t-a) dt
\geq - \mathcal{O} ( I V ),$$
where $$I = \int_{-\infty}^0 (1 + |t|)  \varphi(t-a) dt
\leq  \int_{-\infty}^{-a} (1 + |s|)  \varphi(s) ds \underset{a \rightarrow \infty}{\longrightarrow} 0.$$
Hence, for $a$ large enough depending on $\varepsilon$ and $\varphi$, 
$$\int_{-\infty}^0  \Im \log \zeta (1/2 + i (\tau + t H^{-1}) ) \varphi(t-a) dt
\geq -  \varepsilon V/2.$$
Adding this integral to the same integral on $[0, \infty)$, we deduce the first part of the proposition.
The second part is proven in the same way, up to minor modifications which are left to the reader. 
\end{proof}
In the previous proposition, if we take $\tau$ random and uniformly distributed in $[0,T]$, we deduce the following result: 
\begin{proposition}
For  $\varepsilon \in (0,1/2)$, $K$ as in the previous proposition (depending on $\varepsilon$ and $\varphi$), $T > 100$, $K < V < \log T$, 
$H = KV^{-1} \log T$, $U$ uniformly distributed on $[0,1]$, 
\begin{align*}
\mathbb{P} 
[ | & \Im \log \zeta (1/2 + i UT)|  \geq V]
 \leq \mathbb{P} [ |I(UT, H)| \geq (1-\varepsilon) V ] 
 \\ & + ( 1 + \log \log T) \mathbb{P} 
[ |\Im \log \zeta (1/2 + i UT)| \geq 2 V] + \mathcal{O}_{\varepsilon,\varphi} (T^{-1/2}).
\end{align*} 
\end{proposition}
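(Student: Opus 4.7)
The plan is to apply Proposition 2.2 pointwise with $\tau = UT$, then absorb the various shifts by $aH^{-1}$ and $e^{r}H^{-1}$ into the probabilities on the right‑hand side via a change of variable, using the fact that the translate of the uniform law on $[0,T]$ by a length $\ell$ has total variation distance $O(\ell/T)$ from the original.

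First I would reduce to the range $\tau = UT \in [\sqrt{T}, T]$, i.e.\ $U \in [T^{-1/2},1]$, which is required in order to apply Proposition 2.2. The set $\{U < T^{-1/2}\}$ has measure $T^{-1/2}$ and this contributes the final error term $\mathcal{O}(T^{-1/2})$. Next, I would split the event $\{|\Im\log\zeta(1/2+iUT)| \geq V\}$ according to the sign of $\Im\log\zeta(1/2+iUT)$.

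On the positive piece, Proposition 2.2 (first part), applied with $\tau = UT$, says that if $\Im\log\zeta(1/2+iUT) \geq V$ and, for every integer $r \in [0,\log\log T]$, $\Im\log\zeta(1/2+i(UT - e^{r}H^{-1})) \geq -2V$, then $I(UT + aH^{-1},H) \geq (1-\varepsilon) V$, in particular $|I(UT+aH^{-1},H)| \geq (1-\varepsilon)V$. Therefore the probability of the positive piece is bounded by
\begin{align*}
\mathbb{P}\bigl[|I(UT+aH^{-1},H)| \geq (1-\varepsilon)V\bigr]
+ \sum_{r=0}^{\lfloor \log\log T\rfloor} \mathbb{P}\bigl[\Im\log\zeta(1/2+i(UT-e^{r}H^{-1})) < -2V\bigr].
\end{align*}
For each $r$, the substitution $U' = U - e^{r}H^{-1}/T$ turns the $r$‑th probability into $\mathbb{P}[\Im\log\zeta(1/2+iU'T) < -2V]$ where $U'$ is uniform on an interval of length $1$ whose symmetric difference with $[0,1]$ has measure $2 e^{r}H^{-1}/T$; so each summand is at most $\mathbb{P}[|\Im\log\zeta(1/2+iUT)| \geq 2V] + O(e^{r}H^{-1}/T)$. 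Summing over the $1 + \log\log T$ values of $r$ yields the term $(1+\log\log T)\mathbb{P}[|\Im\log\zeta(1/2+iUT)| \geq 2V]$ plus a total shift error $O(\log T / (HT)) = O(V/(KT))$, which is absorbed in $\mathcal{O}(T^{-1/2})$. The same change of variable applied to the $I$‑term, with $U' = U + aH^{-1}/T$, replaces $\mathbb{P}[|I(UT + aH^{-1},H)|\geq (1-\varepsilon)V]$ by $\mathbb{P}[|I(UT,H)|\geq (1-\varepsilon)V]$ up to an error $O(aH^{-1}/T)$, again $\mathcal{O}_{\varepsilon,\varphi}(T^{-1/2})$.

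The negative piece $\{\Im\log\zeta(1/2+iUT) \leq -V\}$ is handled identically, using the second half of Proposition 2.2: the conclusion $I(UT - aH^{-1},H) \leq -(1-\varepsilon)V$ again forces $|I(\cdot,H)| \geq (1-\varepsilon)V$, and the bad set is a union over $r$ of events $\{\Im\log\zeta(1/2+i(UT+e^{r}H^{-1})) > 2V\}$, bounded exactly as above. Adding the two pieces gives the claimed inequality. I do not expect a real obstacle beyond careful bookkeeping: the only nontrivial point is checking that every translation error, summed over $r \leq \log\log T$, remains below the slack $\mathcal{O}_{\varepsilon,\varphi}(T^{-1/2})$, which follows from $H^{-1} = V/(K\log T) \ll 1$ and $e^{\lfloor \log\log T\rfloor}/T \ll (\log T)/T$.
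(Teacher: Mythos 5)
Your approach is the same as the paper's: apply Proposition 2.2 pointwise, union-bound over the bad events, and remove the shifts by $aH^{-1}$ and $e^{r}H^{-1}$ via the total-variation estimate for translates of the uniform law, with all the resulting shift errors absorbed into $\mathcal{O}_{\varepsilon,\varphi}(T^{-1/2})$. The bookkeeping of the error term (summing $e^{r}H^{-1}/T$ over $r\le\log\log T$ gives $O(\log T\cdot H^{-1}/T)=O(T^{-1/2})$) is correct and matches the paper.

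There is, however, a small but real slip: you symmetrize too early, and as written your argument proves the inequality with an extra factor $2$ in front of both terms on the right-hand side, i.e.\ $2\,\mathbb{P}[|I(UT,H)|\geq(1-\varepsilon)V]+2(1+\log\log T)\,\mathbb{P}[|\Im\log\zeta(1/2+iUT)|\geq 2V]$ rather than the stated bound. This happens because, on the positive piece, you pass from $\mathbb{P}[I(UT+aH^{-1},H)\geq(1-\varepsilon)V]$ to $\mathbb{P}[|I(UT+aH^{-1},H)|\geq(1-\varepsilon)V]$, and from $\mathbb{P}[\Im\log\zeta(\cdot)<-2V]$ to $\mathbb{P}[|\Im\log\zeta(\cdot)|\geq 2V]$, and you then produce the same two-sided quantities a second time from the negative piece. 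The fix is to keep each piece one-sided until the end: the positive piece contributes (after translation) $\mathbb{P}[I(UT,H)\geq(1-\varepsilon)V]$ and $\sum_r\mathbb{P}[\Im\log\zeta(1/2+iUT)<-2V]$, the negative piece contributes $\mathbb{P}[I(UT,H)\leq-(1-\varepsilon)V]$ and $\sum_r\mathbb{P}[\Im\log\zeta(1/2+iUT)>2V]$, and adding the complementary one-sided events gives exactly $\mathbb{P}[|I(UT,H)|\geq(1-\varepsilon)V]$ and $(1+\lfloor\log\log T\rfloor)\mathbb{P}[|\Im\log\zeta(1/2+iUT)|\geq 2V]$. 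This is how the paper assembles the bound. The factor of $2$ would be harmless for the final theorem, but it does mean your argument, as stated, does not quite yield the inequality of the proposition.
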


\begin{proof}
We have immediately, by taking $\tau = UT$,
\begin{align*}
\mathbb{P} 
[ |\Im \log \zeta (1/2 + i UT)| & \geq V]
 \leq \mathbb{P} [ I(UT + a H^{-1}, H) \geq (1-\varepsilon) V ] 
\\ & +  \mathbb{P} [ I(UT - a H^{-1}, H) \leq -(1-\varepsilon) V ] 
 \\ & + \sum_{r = 0}^{ \lfloor \log \log T \rfloor} \mathbb{P} 
[ \Im \log \zeta (1/2 + i (UT + e^r H^{-1}) ) \geq 2 V] 
\\ & + \sum_{r = 0}^{ \lfloor \log \log T \rfloor} \mathbb{P} 
[ \Im \log \zeta (1/2 + i (UT - e^r H^{-1}) ) \leq -  2 V]  
\\ & + \mathcal{O}(T^{-1/2}),
\end{align*} 
the last term being used to discard the event $UT \leq \sqrt{T}$. 
Now, for $u \in \mathbb{R} $, the symmetric difference between the uniform laws on $[0,T]$ and $[u H^{-1}, T + u H^{-1}]$ 
is dominated by a measure of total mass $\mathcal{O} ( |u| H^{-1} T^{-1})$. Hence, 
in the previous expression, we can replace $UT + u H^{-1}$ by $UT$ in each event, with the cost of an error term 
 $\mathcal{O} ( |u| H^{-1} T^{-1}) = \mathcal{O}(|u| T^{-1})$. The values of $|u|$ which are involved are less than 
$\max(a, \log T)$, and there are $\mathcal{O}(\log \log T)$ of them. 
Hence, we get an error term $\mathcal{O} (T^{-1}(a + \log T) \log \log T)  = \mathcal{O}_{\varepsilon, \varphi} (T^{-1/2})$ since $a$ depends only on $\varepsilon$ and $\varphi$. 
\end{proof}
We can now iterate the proposition: applying it for $V, 2V, 4V,...$. After a few manipulations, it gives the following:
\begin{proposition} \label{sumV}
For $\varepsilon \in (0,1/2)$, $K$ as in the previous proposition (depending on $\varepsilon$ and $\varphi$), $T > 100$, $K< V < \log T$, 
$H = KV^{-1} \log T$, $U$ uniformly distributed on $[0,1]$, 
\begin{align*}
\mathbb{P} 
[ | & \Im \log \zeta (1/2 + i UT)|  \geq V]
   \\ &  \leq \sum_{r = 0}^{p-1} (1 +\log \log T)^r  \mathbb{P} [ |I(UT, 2^{-r} H)| \geq (1-\varepsilon) 2^r V ] +   \mathcal{O}_{\varepsilon,\varphi} (T^{-1/3}),
\end{align*} 
where $p$ is the first integer such that $2^p V \geq \log T$. 
\end{proposition}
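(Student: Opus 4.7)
The plan is to iterate the previous proposition $p$ times, at the geometrically increasing levels $V_r := 2^r V$ for $r = 0, 1, \ldots, p-1$, with the window parameter adjusted to $H_r := K V_r^{-1} \log T = 2^{-r} H$ at each step. The defining property of $p$ ensures $V_r < \log T$ exactly for $r < p$, while the lower bound $V_r > K$ persists automatically since $V > K$. Hence the hypotheses of the previous proposition are met at every iteration $r = 0, 1, \ldots, p-1$.

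First I would unroll the recursion. Writing $P_W := \mathbb{P}[|\Im \log \zeta(1/2 + iUT)| \geq W]$ and $Q_r := \mathbb{P}[|I(UT, 2^{-r} H)| \geq (1-\varepsilon) 2^r V]$, the previous proposition at level $V_r$ reads
$$P_{V_r} \leq Q_r + (1 + \log \log T) \, P_{V_{r+1}} + \mathcal{O}_{\varepsilon, \varphi}(T^{-1/2}).$$
Substituting this into itself for $r = 0, 1, \ldots, p-1$ produces
$$P_V \leq \sum_{r=0}^{p-1} (1 + \log \log T)^r Q_r \; + \; (1 + \log \log T)^p P_{2^p V} \; + \; \sum_{r=0}^{p-1} (1 + \log \log T)^r \, \mathcal{O}_{\varepsilon, \varphi}(T^{-1/2}),$$
whose leading sum is exactly the advertised main term.

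It then remains to show that the tail $(1 + \log \log T)^p P_{2^p V}$ and the accumulated error both sit inside $\mathcal{O}_{\varepsilon, \varphi}(T^{-1/3})$. For the tail, I would invoke the conditional Littlewood bound $|\Im \log \zeta(1/2 + it)| \ll \log t / \log \log t$, valid under RH for $t \geq 2$: since $2^p V \geq \log T$ and the right-hand side is $o(\log T)$, the event $\{|\Im \log \zeta(1/2 + iUT)| \geq 2^p V, \; UT \geq 2\}$ is empty once $T$ exceeds an absolute constant, so only the $UT < 2$ slice survives, contributing at most $\mathcal{O}(T^{-1})$. For the error, the definition of $p$ together with $V > 1$ gives $p \leq \log_2 \log T + 1 = \mathcal{O}(\log \log T)$, so $(1 + \log \log T)^p = \exp(\mathcal{O}((\log \log T)(\log \log \log T))) = T^{o(1)}$; consequently both $(1 + \log \log T)^p \cdot T^{-1}$ and $p \cdot (1 + \log \log T)^{p-1} \cdot T^{-1/2}$ are $T^{-1/2 + o(1)}$, comfortably $\leq T^{-1/3}$ for $T$ large in terms of $\varepsilon$ and $\varphi$.

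The only genuinely delicate step is the disposal of $P_{2^p V}$: the inflation factor $(1 + \log \log T)^p$ is subpolynomial but far from negligible, so one really needs a pointwise upper bound on $|\Im \log \zeta(1/2 + it)|$ that is strictly smaller than $\log T$ on $UT \in [\sqrt T, T]$, and Littlewood's conditional bound supplies exactly this. Once that input is in hand, the remainder is pure recursive substitution and geometric-series bookkeeping.
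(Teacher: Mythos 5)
Your proposal is correct and essentially matches the paper's argument: iterate the preceding proposition at levels $2^r V$ (noting $H_r = 2^{-r}H$), unroll the recursion, bound the accumulated error by $p\,(1+\log\log T)^{p-1}\,T^{-1/2} = T^{-1/2+o(1)} \ll T^{-1/3}$ using $p = \mathcal{O}(\log\log T)$, and kill the remainder $(1+\log\log T)^p P_{2^pV}$ via the conditional bound $|\Im\log\zeta(1/2+it)| \ll \log t / \log\log t$ (Titchmarsh, Theorem 14.13), which forces the event $\{|\Im\log\zeta(1/2+iUT)| \geq \log T\}$ to be empty for $T$ large. The paper cites Titchmarsh's Theorem~14.13 directly rather than speaking of Littlewood's bound and simply asserts the probability is zero for $T$ large (absorbing small $T$ into the error constant), but these are cosmetic differences; the substance of the argument is the same.
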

\begin{proof}
We iterate the formula until the value of $V$ reaches $\log T$. The number of steps is dominated by $\log \log T - \log K \leq \log \log T$.  Each step gives an error term 
of at most $\mathcal{O}_{\varepsilon, \varphi} ( ( 1+ \log \log T)^{\mathcal{O} (\log \log T)}  T^{-1/2})$. Hence, the total error is $  \mathcal{O}_{\varepsilon,\varphi} (T^{-1/3})$.
We deduce
\begin{align*}
\mathbb{P}  
[ |\Im & \log \zeta (1/2 + i UT)|  \geq V] \leq \sum_{r = 0}^{p-1} (1 +\log \log T)^r  \mathbb{P} [ |I(UT, 2^{-r} H)| \geq (1-\varepsilon) 2^r V ] 
\\ & + (1 +\log \log T)^p \, \mathbb{P} 
[ |  \Im \log \zeta (1/2 + i UT)|  \geq  2^p V] + \mathcal{O}_{\varepsilon, \varphi} (T^{-1/3}),
\end{align*}
Under the Riemann hypothesis, Theorem 14.13 of Titchmarsh \cite{Tit} shows that $|  \Im \log \zeta (1/2 + i UT)|   
\ll (\log \log T)^{-1} \log T$ under the Riemann hypothesis. Hence the probability that $|\Im \log \zeta|$ is larger than $2^p V \geq \log T$ is equal to zero if $T$ is large enough, which can be assumed (for small $T$, we can absorb everything in the error term). 
\end{proof}
\section{Tail distribution of the averaged version of  $\Im \log \zeta$ and proof of the main theorem}
The averaged version $I(\tau, H)$ of $\Im \log \zeta$ can be written in terms of  sums indexed by primes: 

\begin{proposition} \label{Naj32}
Let us assume the Riemann hypothesis. There exists $\alpha > 0$, depending only on the function $\varphi$, such that 
for all $\tau \in \mathbb{R}$, $0 < H < \alpha \log  (2+|\tau|)$, $$ I (\tau, H)
 = \Im \sum_{p \in \mathcal{P}} p^{-1/2 - i \tau} \widehat{\varphi} (H^{-1} \log p) 
 + 
\frac{1}{2} \Im \sum_{p \in \mathcal{P}} p^{-1- 2 i \tau} \widehat{\varphi} (2 H^{-1} \log p) + 
\mathcal{O}_{\varphi} (1),
$$
$\mathcal{P}$ being the set of primes. 
\end{proposition}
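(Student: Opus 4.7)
\textit{Proof proposal.} The plan is to view $I(\tau,H)$ as a contour integral along the line $\Re s = 1/2$ in the variable $s = 1/2 + i\tau + it/H$, shift the contour into $\Re s > 1$ where $\log\zeta$ is given by its absolutely convergent Euler--Dirichlet series, and integrate term by term. Since $\widehat\varphi$ has compact support, say in $[-M,M]$ with $M$ depending only on $\varphi$, each $k$-th layer of the resulting double sum over primes will be finite, and only the $k=1$ and $k=2$ layers will fail to be $\mathcal{O}_\varphi(1)$. By Paley--Wiener, $\varphi$ extends to an entire function satisfying $|\varphi(u+iv)| \ll_N (1+|u|)^{-N}e^{M|v|}$ for every $N\ge 0$. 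Writing
\[ I(\tau,H) = \Im \int_{\mathbb{R}} \log\zeta\bigl(1/2 + i\tau + it/H\bigr)\,\varphi(t)\,dt, \]
I would deform the $t$-contour from $\mathbb{R}$ down to $\mathbb{R} - iH(1/2+\eta)$ for a small fixed $\eta>0$, which in the $s$-variable shifts the line of integration from $\Re s = 1/2$ to $\Re s = 1+\eta$.

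In the intermediate strip $1/2 < \Re s < 1+\eta$, under the Riemann hypothesis the only singularity of $\log\zeta$ is the logarithmic branch cut issuing from the pole at $s=1$, which I place along the real segment $(1/2,1]$. In the $t$-plane this cut becomes the short vertical segment $\{-H\tau + iv : v\in[-H/2,0]\}$, and the deformation must encircle it. Since the jump of $\log\zeta$ across this cut equals $2\pi i$, the encircling contribution reduces to an expression of the form $2\pi\int_{-H/2}^{0}\varphi(-H\tau+iv)\,dv$, which Paley--Wiener bounds by $\ll_N (1+H|\tau|)^{-N}\,H\,e^{MH/2}$. Choosing $\alpha$ small enough in terms of $M$ (hence of $\varphi$), one has $e^{MH/2}\le(2+|\tau|)^{M\alpha/2}$, which is absorbed by the polynomial decay for $N$ large, so the cut contribution is $\mathcal{O}_\varphi(1)$. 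The horizontal pieces of the deformation rectangle at $|\Re t|\to\infty$ vanish by the rapid decay of $\varphi$ against the sub-polynomial growth of $\log\zeta$ under RH, and the zeros of $\zeta$ on the original contour produce only integrable logarithmic jumps, handled by approaching the real line from below.

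On the shifted contour $\Re s = 1+\eta$ the Dirichlet series $\log\zeta(s) = \sum_{p,k\ge 1}k^{-1}p^{-ks}$ converges absolutely, allowing me to exchange sum with integral. Writing $p^{-ks} = p^{-k(1/2+i\tau)}\,p^{-ikt/H}$ as a function of $t$, and noting that the scalar integral $\int p^{-ikt/H}\varphi(t)\,dt$ of this entire, rapidly decaying integrand takes the same value $\widehat\varphi(k\log p/H)$ on any horizontal contour by Cauchy's theorem, I obtain
\[ \sum_{p,\,k\ge 1}\frac{1}{k}\,p^{-k(1/2+i\tau)}\,\widehat\varphi(k\log p/H). \]
The $k=1$ and $k=2$ terms are exactly the two sums in the statement. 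The tail $k\ge 3$ is bounded in absolute value by $\sum_{k\ge 3}\zeta(k/2)/k < \infty$, hence $\mathcal{O}(1)$, absorbed into the error. Taking imaginary parts completes the proof.

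The main obstacle will be the contour deformation of paragraph two: one has to dominate $\varphi$'s exponential growth $e^{M|v|}$ in the imaginary direction by the polynomial decay in the real direction together with the hypothesis $H < \alpha\log(2+|\tau|)$. This balancing of exponential against polynomial factors is precisely what forces the hypothesis on $H$ and fixes the admissible value of $\alpha$ in terms of $\varphi$.
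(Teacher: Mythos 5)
The paper does not actually prove Proposition \ref{Naj32}; it cites Proposition 3.2 of \cite{bib:Naj}, which in turn rests on Lemma 5 of Tsang \cite{Tsang86}. Those references obtain the identity by an explicit-formula computation against a band-limited test function, which is the same circle of ideas as your contour shift, so you are essentially reconstructing the cited lemma rather than taking an independent route. Your reconstruction is sound in outline: the Paley--Wiener bound $|\varphi(u+iv)|\ll_N(1+|u|)^{-N}e^{M|v|}$, the observation that $\widehat\varphi$ having compact support makes every $k$-layer a finite sum, the image of the cut $(1/2,1]$ in the $t$-plane as the vertical segment at $\Re t=-H\tau$, and the balance $H\,e^{MH/2}(1+H|\tau|)^{-N}=\mathcal{O}_\varphi(1)$ forced by $H<\alpha\log(2+|\tau|)$ are all correctly identified as the crux, and they do pin down where the hypothesis on $H$ comes from.

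Two points need repair. First, the passage past the starting line $\Re s=1/2$ is glossed over: $\log\zeta$ is not holomorphic there (branch points at every nontrivial zero, and the branch cut from $s=1$ also terminates on this line), so one must begin from $\Re s=1/2+\epsilon$, deform to $\Re s=1+\eta$ in the domain of holomorphy, and only then let $\epsilon\to 0$. That last limit is not a bare appeal to dominated convergence, because $\Re\log\zeta(1/2+\epsilon+iy)\to-\infty$ at each zero; one needs a uniform-in-$\epsilon$ integrable majorant (available, e.g., through the Hadamard factorization, which bounds each local contribution by $|\log|y-\gamma||+\mathcal{O}(1)$). Note also that on any shifted line $\varphi$ is no longer real, so $\Im$ cannot be brought inside the integral until the contour is back on $\mathbb{R}$ term by term — you do in fact keep the complex integral and take $\Im$ at the end, which is the right move, but it should be stated. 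Second, a small slip: $\sum_{k\ge3}\zeta(k/2)/k$ diverges; the tail is instead bounded by $\sum_{k\ge3}k^{-1}\sum_p p^{-k/2}\le\sum_{k\ge3}k^{-1}\bigl(\zeta(k/2)-1\bigr)<\infty$, since $\zeta(k/2)-1\asymp 2^{-k/2}$. The conclusion that the $k\ge 3$ layers are $\mathcal{O}(1)$ is still correct.
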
 
\begin{proof}
This result is an immediate consequence of Proposition 3.2 of \cite{bib:Naj}, which is itself deduced from Lemma 5 of 
Tsang \cite{Tsang86}.
\end{proof}
We will now estimate the tail distribution of $I(UT, H)$, where $U$ is uniformly distributed on $[0,1]$, by using upper bounds of the moments of the sums on primes involved in the previous proposition. 
 We use Lemma 
3 of  Soundararajan \cite{Sound}, which is  presented as a standard mean value estimate by the author (a similar result can be found in Lemma 3.3 of Tsang's thesis \cite{Tsang84}), and which can be stated as follows: 
\begin{proposition} \label{Lemma3Sound}
For $T$ large enough and $2 \leq x \leq T$, for $k$ a natural number such that 
$x^k \leq T/\log T$, and for any complex numbers $a(p)$ indexed by the primes, 
we have 
$$\int_{T}^{2T} \left| \sum_{p \leq x, p \in \mathcal{P}} \frac{a(p)}{p^{1/2 + it}} \right|^{2k} dt
\ll T k! \left( \sum_{p \leq x, p \in \mathcal{P}} \frac{|a(p)|^2}{p} \right)^k.$$
\end{proposition}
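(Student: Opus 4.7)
The plan is to run the classical mean value argument: open up the $2k$-th power as a $2k$-fold sum over primes, swap with the integral, and split the resulting sum into a diagonal part (which gives the main term) and an off-diagonal part (which is absorbed by the hypothesis $x^{k} \leq T/\log T$).

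Concretely, writing $P = p_{1}\cdots p_{k}$ and $Q = q_{1}\cdots q_{k}$, expanding $|\cdot|^{2k}$ yields
\[
\int_{T}^{2T}\Bigl|\sum_{p\leq x}\frac{a(p)}{p^{1/2+it}}\Bigr|^{2k}dt
= \sum_{p_{1},\dots,p_{k},\,q_{1},\dots,q_{k}\leq x}
\frac{a(p_{1})\cdots a(p_{k})\overline{a(q_{1})\cdots a(q_{k})}}{\sqrt{PQ}}
\int_{T}^{2T}\Bigl(\frac{Q}{P}\Bigr)^{it}dt.
\]
When $P=Q$, unique factorisation forces $(q_{1},\dots,q_{k})$ to be a permutation of $(p_{1},\dots,p_{k})$, so there are at most $k!$ such ordered tuples for each unordered choice; the integral equals $T$, and summing gives the bound $T\,k!\bigl(\sum_{p\leq x}|a(p)|^{2}/p\bigr)^{k}$, which is exactly the claimed main term (the multinomial identity $\sum\binom{k}{k_{1},\dots,k_{m}}\prod x_{i}^{k_{i}} = (\sum x_{i})^{k}$ handles the case where some $p_{i}$ coincide without loss).

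For the off-diagonal $P\neq Q$, I would use $\bigl|\int_{T}^{2T}(Q/P)^{it}dt\bigr|\leq 2/|\log(Q/P)|$ together with the lower bound $|\log(Q/P)| \geq 1/\max(P,Q)\geq 1/x^{k}\geq (\log T)/T$, coming from $|P-Q|\geq 1$ and $P,Q\leq x^{k}\leq T/\log T$. This turns the off-diagonal contribution into at most
\[
\frac{2T}{\log T}\Bigl(\sum_{p\leq x}\frac{|a(p)|}{\sqrt{p}}\Bigr)^{2k}
\leq \frac{2T}{\log T}\,\pi(x)^{k}\Bigl(\sum_{p\leq x}\frac{|a(p)|^{2}}{p}\Bigr)^{k}
\]
by Cauchy--Schwarz, and since $\pi(x)\leq x$ and $x^{k}\leq T/\log T$, the off-diagonal is $O\bigl(T(\log T)^{-2}x^{k}\cdot(\sum|a(p)|^{2}/p)^{k}\bigr)$, which is dominated by the diagonal.

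The only subtle point is the bookkeeping for repeated primes in the diagonal, to make sure the combinatorial factor is truly $k!$ and not a larger quantity; this is handled cleanly by rewriting the diagonal sum as $\sum_{P}\frac{1}{P}\bigl(\sum_{p_{1}\cdots p_{k}=P}a(p_{1})\cdots a(p_{k})\bigr)\bigl(\overline{\cdots}\bigr)$ and applying the multinomial expansion, which gives exactly $k!\bigl(\sum|a(p)|^{2}/p\bigr)^{k}$ after dominating $|a(p_{1})\cdots a(p_{k})\overline{a(q_{1})\cdots a(q_{k})}|$ by $\tfrac{1}{2}(|a(p_{1})\cdots a(p_{k})|^{2}+|a(q_{1})\cdots a(q_{k})|^{2})$ via AM--GM. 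No step should present a serious obstacle; the $x^{k}\leq T/\log T$ hypothesis is exactly what makes the off-diagonal admissible.
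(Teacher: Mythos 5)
The paper does not prove this proposition; it simply cites it as Lemma~3 of Soundararajan \cite{Sound} (a ``standard mean value estimate,'' also Lemma~3.3 of Tsang's thesis). So your proof cannot be compared against a proof in the paper, but it can be checked on its own merits, and there is a genuine gap in the off-diagonal step.

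Your diagonal analysis is fine: with $b(P):=\sum_{p_1\cdots p_k=P}a(p_1)\cdots a(p_k)$ the diagonal is $T\sum_P|b(P)|^2/P$, and since $b(P)=\binom{k}{m_1,\dots,m_r}a(p_1)^{m_1}\cdots a(p_r)^{m_r}$ with $\binom{k}{m_1,\dots,m_r}\leq k!$, one gets $\sum_P|b(P)|^2/P\leq k!\bigl(\sum_p|a(p)|^2/p\bigr)^k$, as claimed. The problem is in the off-diagonal. After writing $\bigl|\int_T^{2T}(Q/P)^{it}dt\bigr|\leq 2/|\log(Q/P)|$, you replace $|\log(Q/P)|^{-1}$ uniformly by $\max(P,Q)\leq x^k$ and then apply Cauchy--Schwarz, arriving at the bound $\frac{2T}{\log T}\pi(x)^k\bigl(\sum_p|a(p)|^2/p\bigr)^k$. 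You assert this is ``dominated by the diagonal,'' but it is not: one would need $\pi(x)^k/\log T\ll k!$, which fails badly (take $k=1$ and $x$ near $\sqrt{T/\log T}$, where $\pi(x)/\log T\to\infty$ while $k!=1$). Throwing away the $|P-Q|^{-1}$ dependence entirely, together with the crude Cauchy--Schwarz step that introduces a factor $\pi(x)^k$, is too lossy.

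The standard repair keeps the $|P-Q|$ dependence and applies AM--GM (not Cauchy--Schwarz) in the off-diagonal: from $|\log(Q/P)|^{-1}\leq \max(P,Q)/|P-Q|\leq x^k/|P-Q|$ and $\dfrac{|b(P)b(Q)|}{\sqrt{PQ}}\leq\tfrac12\Bigl(\dfrac{|b(P)|^2}{P}+\dfrac{|b(Q)|^2}{Q}\Bigr)$, the off-diagonal is
\begin{equation*}
\ll x^k\sum_{P}\frac{|b(P)|^2}{P}\sum_{\substack{Q\le x^k\\ Q\neq P}}\frac{1}{|P-Q|}
\ll x^k\,\log(x^k)\sum_{P}\frac{|b(P)|^2}{P}
\le T\sum_{P}\frac{|b(P)|^2}{P},
\end{equation*}
using $x^k\log(x^k)\le (T/\log T)\log T=T$. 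This is then bounded by $T\,k!\bigl(\sum_p|a(p)|^2/p\bigr)^k$ exactly as the diagonal, completing the proof. Alternatively one can invoke the Montgomery--Vaughan mean value theorem directly, which gives $\int_T^{2T}\bigl|\sum_n b(n)n^{-it}\bigr|^2dt = T\sum_n|b(n)|^2 + O\bigl(\sum_n n|b(n)|^2\bigr)$ and $n\le x^k\le T/\log T$ makes the error term admissible. Either route fixes the gap; your original bound does not.
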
 
From Propositions \ref{Naj32} and \ref{Lemma3Sound}, we can deduce the following tail estimate: 

\begin{proposition}
For $T$ large enough,  $\varepsilon \in (0,1/10)$, $K > 1$ depending only on $\varepsilon$ and $\varphi$, $0 < V < \log T$, 
$H = K V^{-1} \log T$, we have 
$$\mathbb{P} [ |I(UT,  H)| \geq (1-\varepsilon) V] \ll_{\varepsilon, \varphi} e^{- (1- 3 \varepsilon) V^2 / \log \log T}
 + e^{-  b_{\varepsilon, \varphi} V \log V} + T^{-1/2},$$
where $b_{\varepsilon, \varphi} > 0$ depends only on $\varepsilon$ and $\varphi$. 
\end{proposition}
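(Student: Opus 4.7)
By Proposition~\ref{Naj32},
$I(UT,H) = \Im S_1(UT) + \tfrac{1}{2}\Im S_2(UT) + \mathcal{O}_\varphi(1)$
with $S_j(\tau) := \sum_{p\in\mathcal{P}} p^{-j/2-ij\tau}\,\widehat{\varphi}(jH^{-1}\log p)$, and each sum is supported on primes $p \leq x := e^{C_\varphi H}$ by the compact support of $\widehat{\varphi}$ (say in $[-C_\varphi,C_\varphi]$). I first excise the event $\{UT < \sqrt T\}$ at cost $\mathcal{O}(T^{-1/2})$, and choose $K$ large enough (in terms of $\alpha$ and $\varphi$) that the hypothesis $H<\alpha\log(2+|\tau|)$ of Proposition~\ref{Naj32} is satisfied for $\tau = UT \geq \sqrt T$. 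Since $|\widehat{\varphi}| \leq 1$, the secondary sum $S_2$ satisfies $|S_2| \leq \sum_{p \leq e^{C_\varphi H/2}} p^{-1} = \mathcal{O}_\varphi(\log\log T)$ deterministically, so its contribution to the event is negligible once $V$ is reasonably large (while for $V$ bounded by a constant the target Gaussian bound is $\gtrsim 1$ and the conclusion is trivial).

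The main step is to bound the tail of $S_1(UT)$. Applying Proposition~\ref{Lemma3Sound} dyadically on $[\sqrt T,T]$ gives
\[
\mathbb{E}\bigl[|S_1(UT)|^{2k}\,\mathbf{1}_{UT\geq\sqrt T}\bigr] \ll k!\,\Sigma^k,
\qquad \Sigma := \sum_{p\leq x}\frac{|\widehat{\varphi}(H^{-1}\log p)|^2}{p},
\]
valid as long as the constraint $x^k \leq \sqrt T/\log T$ of Proposition~\ref{Lemma3Sound} holds on the smallest dyadic piece, i.e.\ $k \leq k_{\max} \asymp_{\varepsilon,\varphi} V$ (explicitly, $k_{\max} = (1-o(1))V/(2C_\varphi K)$). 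By Mertens' theorem and $|\widehat\varphi|\leq 1$, $\Sigma \leq \log\log T + \mathcal{O}_\varphi(1)$. Markov's inequality combined with Stirling then yields, for every integer $k \in [1,k_{\max}]$,
\[
\mathbb{P}\bigl[|S_1(UT)| \geq (1-\varepsilon)V\bigr]
\ll \sqrt{k}\,\Bigl(\tfrac{k\,\Sigma}{e(1-\varepsilon)^2 V^2}\Bigr)^{\!k}.
\]

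Now I optimize in $k$. The unconstrained minimizer is $k_\ast := \lfloor (1-\varepsilon)^2 V^2/\Sigma\rfloor$. If $k_\ast \leq k_{\max}$ (the ``Gaussian regime'', roughly $V \lesssim_{\varepsilon,\varphi} \log\log T$), choosing $k=k_\ast$ produces $\exp[-(1-\varepsilon)^2 V^2/\Sigma + \mathcal{O}(\log V)]$; after replacing $\Sigma$ by $\log\log T(1+\mathcal{O}_\varphi(1/\log\log T))$ and absorbing the $\mathcal{O}(\log V)$ Stirling loss into the exponent, the $\varepsilon$ margin is enough to give $\exp[-(1-3\varepsilon) V^2/\log\log T]$. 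In the complementary regime $k_\ast > k_{\max}$, the function $k \mapsto (k\Sigma/(eV^2))^k$ is still decreasing at $k_{\max}$, so choosing $k = k_{\max}$ gives $\exp[-k_{\max}\log(V/\Sigma)(1+o(1))] \leq \exp[-b_{\varepsilon,\varphi} V\log V]$, the last step using $\log(V/\Sigma) \asymp \log V$ since $V$ then exceeds a constant multiple of $\log\log T \geq \Sigma$. Adding the three contributions (the two regimes and the $\mathcal{O}(T^{-1/2})$ from restricting to $UT\geq\sqrt T$) establishes the claim.

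The delicate point is the sharp constant $1-3\varepsilon$ in the Gaussian exponent: the Mertens estimate for $\Sigma$, the $\sqrt{k}$ Stirling factor, the integer rounding of $k_\ast$, and the separation of $S_1$ from $S_2$ each consume a small portion of the $\varepsilon$ budget, and one must verify that all these losses can be absorbed (either into the exponent or into the $\ll_{\varepsilon,\varphi}$ constant) uniformly in $V\in(0,\log T)$. A secondary technical nuisance is the dyadic reduction of Proposition~\ref{Lemma3Sound} from intervals $[T,2T]$ to $[\sqrt T,T]$, which requires checking that the constraint $x^k \leq T'/\log T'$ survives on the smallest piece $T'\asymp\sqrt T$; this forces the factor $1/2$ in the definition of $k_{\max}$ but is otherwise routine.
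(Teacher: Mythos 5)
Your strategy is recognisably the paper's, but there are two genuine gaps, both concentrated in the intermediate range $\sqrt{\log\log T}\lesssim V\lesssim(\log\log T)^2$, which is exactly the range on which the proposition carries content.

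\textbf{The squares sum.} You dismiss the prime\mbox{-}squares sum (the paper's $S_3$) with the deterministic bound $|S_3|\leq\tfrac12\sum_{p\leq e^{C_\varphi H/2}}p^{-1}=\mathcal{O}_\varphi(\log\log T)$. That bound is too weak: it only forces $|S_3|\leq\varepsilon V/100$ once $V\gg\log\log T$, whereas the proposition is nontrivial already for $V\gg\sqrt{\log\log T}$. For $V$ of order, say, $\log\log T/10$ the event $\{|S_3|\geq\varepsilon V/100\}$ is not deterministically empty and you have no estimate for its probability. The paper handles this by applying the mean value estimate to $S_3$ with $a(p)=p^{-1/2}\widehat\varphi(2H^{-1}\log p)$, so that the variance parameter is $\sum p^{-2}=\mathcal{O}(1)$ and one gets a $2k$\mbox{-}th moment $\ll k!\,\mathcal{O}(1)^k$, hence a tail $\ll V^{-\Theta_{\varepsilon,\varphi}(V)}$ after choosing $k\asymp_{\varepsilon,\varphi}V$. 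Your ``trivial for bounded $V$'' escape hatch addresses only $V=\mathcal{O}(1)$, not $V$ up to $\log\log T$.

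\textbf{The missing split into small and large primes.} You apply Proposition~\ref{Lemma3Sound} to the \emph{entire} prime sum $S_1$, supported on $p\leq e^{C_\varphi H}$. Because the support goes up to $x=e^{C_\varphi H}=T^{C_\varphi K/V}$, the constraint $x^k\leq T'/\log T'$ caps $k$ at $k_{\max}\asymp_{\varepsilon,\varphi}V$, which is what you wrote. The unconstrained optimiser is $k_*\approx(1-\varepsilon)^2V^2/\Sigma$, so your ``Gaussian regime'' $k_*\leq k_{\max}$ only reaches $V\lesssim_{\varepsilon,\varphi}\log\log T$. For $V$ slightly beyond that you must set $k=k_{\max}$; writing $V=\lambda V_b$ with $V_b$ the boundary value, the bracket $k_{\max}\Sigma/(eW^2)$ equals $1/(e\lambda)$ and your bound becomes $\exp[-k_{\max}(1+\log\lambda)]$. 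Comparing with the Gaussian target $\exp[-(1-3\varepsilon)V^2/\log\log T]$, the requirement is $(1+\log\lambda)/\lambda\geq(1-3\varepsilon)/(1-\varepsilon)^2$, which fails already for $\lambda>1+\mathcal{O}(\sqrt{\varepsilon})$; the Freud term $e^{-b V\log V}$ does not rescue you either, since in this range $\log(V/\Sigma)=\mathcal{O}(1)$ while $\log V\asymp\log\log\log T\to\infty$, so your claimed identity $\log(V/\Sigma)\asymp\log V$ is false. The paper avoids all of this by splitting the prime sum at $T^{1/(V\log\log T)}$: for the small\mbox{-}prime piece the support parameter is $x=T^{1/(V\log\log T)}$ so the moment bound holds for $k$ up to $(V\log\log T)/2$, large enough to take $k=\lfloor W^2/\log\log T\rfloor$ throughout the whole range $V\leq(\log\log T)^2/2$ and to hand off to the Freud bound beyond; the large\mbox{-}prime tail has tiny variance $\log\log\log T+\mathcal{O}(1)$, so $k\asymp V$ already gives a $V^{-\Theta(V)}$ tail there. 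Without this split the argument does not close in the window $\log\log T\lesssim V\lesssim(\log\log T)^2$.

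Your dyadic reduction of the mean value estimate from $[T,2T]$ to $[\sqrt T,T]$ and the choice of $K$ to satisfy the hypothesis of Proposition~\ref{Naj32} are fine and match the paper; the two issues above are what need fixing.
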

\begin{proof}
We can assume $V \geq 10 \sqrt{ \log \log T}$ and $V$, $T$ are larger than any given quantity depending only on $\varepsilon$ and $\varphi$: otherwise the upper bound is trivial. 
In particular, if we choose $\alpha$ as in Proposition \ref{Naj32}, it depends only on $\varphi$, and then, for $K > 1$ depending only on $\varepsilon$ and $\varphi$, we can assume $V > 2K/\alpha$. 
From this inequality, we deduce $H  < ( \alpha/2) \log T$, which gives 
$H < \alpha \log (2 + UT)$ with probability $  1 - \mathcal{O} (T^{-1/2})$. Under this condition, 
 Proposition \ref{Naj32} applies to $\tau = UT$, and we deduce: 
$$I(UT,  H) = \Im S_1 + \Im S_2 + \Im S_3 + \mathcal{O}_{\varphi}(1),$$
where 
$$S_1 := \sum_{p \in \mathcal{P}, p \leq T^{1/( V \log \log T)} } p^{-1/2 - i UT} \widehat{\varphi} (H^{-1} \log p),$$
$$S_2 := \sum_{p \in \mathcal{P}, p > T^{1/( V \log \log T)} } p^{-1/2 - i UT} \widehat{\varphi} (H^{-1} \log p),$$
$$S_3 := \frac{1}{2} \sum_{p \in \mathcal{P} } p^{-1 - 2 i UT} \widehat{\varphi} (2 H^{-1} \log p).$$
Since we can assume $V$ large depending on $\varepsilon$ and $\varphi$, we can suppose that 
the term $ \mathcal{O}_{\varphi}(1)$ is smaller than $\varepsilon V/20$: 
$$|I(UT,  H)| \leq |S_1 |+ |S_2| + |S_3 |+ \varepsilon V/20,$$
with probability $  1 - \mathcal{O} (T^{-1/2})$.
We deduce 
$$ \mathbb{P} [ |I(UT,  H)| \geq (1-\varepsilon) V] \leq \mathbb{P} [ |S_1| \geq (1- 1.1 \varepsilon) V] $$ $$+ 
\mathbb{P} [| S_2 |\geq \varepsilon V/100] + \mathbb{P} [| S_3 | \geq \varepsilon V/100]
+ \mathcal{O}(T^{-1/2}).$$

We estimate the tail of these sums by applying Markov inequality to their moment of order $2k$, $k$ being a suitably chosen integer.  Since $\widehat{\varphi}$ is compactly supported, all the sums have finitely many non-zero terms, and 
we can apply, for $T$ large, the lemma to all values of $k$ up to $\gg_{\varphi} \log (T /\log T) / H$, 
i.e. $\gg_\varphi V/K$, and then $\gg_{\varepsilon, \varphi} V$.  
For the sum $S_1$, we can even go up to $(V \log \log T)/2$. 
For $S_2$, we can take $k = \lfloor c_{\varepsilon, \varphi} V \rfloor$, for a suitable $c_{\varepsilon, \varphi} > 0$ depending only on $\varepsilon$ and $\varphi$. The moment of order $2k$ is 
$$\ll k! \left(\sum_{T^{1/(V \log \log T)} < p \leq  e^{\mathcal{O}_\varphi (H)}} p^{-1} \right)^k
\leq  k^k ( \log \log \log T  + \mathcal{O}_{\varepsilon, \varphi}(1) )^k$$
Hence, 
$$\mathbb{P} [ | S_2 |\geq \varepsilon V/100] \leq (\varepsilon V/100)^{- 2 \lfloor c_{\varepsilon, \varphi} V \rfloor} 
( c_{\varepsilon, \varphi} V  ( \log \log \log T  + \mathcal{O}_{\varepsilon, \varphi}(1)  ) )^{ \lfloor c_{\varepsilon, \varphi} V \rfloor }
$$
Since we have assumed $V \geq 10 \sqrt{\log \log T}$, 
we have
$$(\varepsilon V/100)^{-2}    c_{\varepsilon, \varphi} V  ( \log \log \log T  + \mathcal{O}_{\varepsilon, \varphi}(1)  ) 
\leq V^{-0.99}$$
and 
$$ \lfloor c_{\varepsilon, \varphi} V \rfloor \geq 0.99  \, c_{\varepsilon, \varphi} V,$$
for $T$ large enough depending on $\varepsilon$ and $\varphi$. 
Hence 
$$ \mathbb{P} [ | S_2| \geq \varepsilon V/100] \leq V^{-0.98 c_{\varepsilon, \varphi} V},$$
which is acceptable. 
An exactly similar proof is available for $S_3$, since we even get a $2k$-th moment bounded by $k! (\mathcal{O}(1))^k$. 

For $S_1$, the $2k$-th moment is 
$$\ll k! (\log \log T + \mathcal{O}_{\varepsilon, \varphi}(1))^k$$
for $k \leq (V \log \log T)/2$. Hence, the probability that $|S_1| \geq W := (1 - 1.1 \varepsilon)V$ is 
$$\ll W^{-2k} k! (\log \log T + \mathcal{O}_{\varepsilon, \varphi}(1))^k
$$
We appoximately optimize this expression in $k$. If $V \leq (\log \log T)^2/2$, we can take 
$k = \lfloor W^2/ \log \log T \rfloor$ since this expression is smaller than $V \log \log T/2$.  Notice that since
 $V \geq 10 \sqrt{\log \log T}$ and $\varepsilon < 1/10$, we have $W \geq  8 \sqrt{\log \log T}$ and $k$ is strictly positive. 
The probability that  $|S_1| \geq W$ is then 
$$ \ll [W^{-2} (k/e) (\log \log T +  \mathcal{O}_{\varepsilon, \varphi}(1))]^k \sqrt{k} $$
The quantity inside the bracket is smaller than $e^{-(1- (\varepsilon/100))}$ for $T$ large enough depending on $\varepsilon$ and $\varphi$. Hence, in this case, the probability is 
$$ \leq e^{- (1- (\varepsilon/100)) k} \sqrt{k} \ll_{\varepsilon} e^{- (1- (\varepsilon/50)) k}
 \ll e^{ - (1- (\varepsilon/50)) ( 1- 1.1 \varepsilon)^2 V^2 / \log \log T}.$$
This is acceptable. 
If $V > (\log \log T)^2/2$, we take $k = \lfloor V \log \log T/2 \rfloor$. 
We again get a probability
$$ \ll  [W^{-2} (k/e) (\log \log T +  \mathcal{O}_{\varepsilon, \varphi}(1))]^k \sqrt{k}.$$
Inside the bracket, the quantity is bounded, for $T$ large enough depending on $\varepsilon$ and $\varphi$, 
 by 
\begin{align*}
& W^{-2} (V \log \log T/2e) (1.001 \log \log T) \leq W^{-2} V   (\log \log T)^2/5.4  \\ & = V^{-1} (\log \log T)^2 (1-1.1\varepsilon)^{-2}/5.4   \leq 2  (1-1.1\varepsilon)^{-2}/5.4 \leq 1/2
\end{align*}
Hence, we get  a probability
$$\ll 2^{-k} \sqrt{k} \ll e^{-k/2}  \ll e^{- V \log \log T/4} \ll e^{-V \log V/4},$$
the last inequality coming from the fact that $V < \log T$ by assumption. This is again acceptable. 
\end{proof}
We then get the following bounds for the tail of $\Im \log \zeta$, which easily imply the main theorem by integrating
against $e^{2 k V}$: 
\begin{proposition}
For all $\varepsilon \in (0,1/10)$, $V > 0$, 
$$\mathbb{P} [ | \Im \log \zeta(1/2 + iUT)| \geq V] 
\ll_{\varepsilon}   e^{(\log \log \log T)^3} e^{- (1-\varepsilon) V^2/ \log \log T} + e^{- c_{\varepsilon} V \log V},$$
where $c_{\varepsilon} > 0$ depends only on $\varepsilon$. 
\end{proposition}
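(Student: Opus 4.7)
The plan is to combine Proposition \ref{sumV} with the tail estimate for $I(UT,H)$ just established. First I would dispose of trivial ranges: for $V \leq K$ (with $K$ as in Proposition \ref{sumV}) the claimed right-hand side is already bounded below by an $\varepsilon$-dependent positive constant, and for $V \geq \log T$ the left-hand side vanishes under RH by Theorem 14.13 of \cite{Tit}, as already exploited in the proof of Proposition \ref{sumV}. In the remaining range $K < V < \log T$, I would apply Proposition \ref{sumV} with parameter $\varepsilon' := \varepsilon/4$ and then substitute $V \mapsto 2^r V$, $H \mapsto 2^{-r}H$ in the preceding tail estimate to obtain
\[
\mathbb{P}[|\Im\log\zeta(1/2+iUT)| \geq V] \ll_{\varepsilon,\varphi} \sum_{r=0}^{p-1}(\log\log T)^r\Bigl[e^{-(1-3\varepsilon')(2^r V)^2/\log\log T} + e^{-b(2^r V)\log(2^r V)} + T^{-1/2}\Bigr] + T^{-1/3},
\]
where $p$ is the first integer with $2^p V \geq \log T$, so $p = O(\log\log T)$.

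The three resulting pieces are handled separately. The $T^{-1/2}$ contribution, multiplied by $\sum_{r<p}(\log\log T)^r = T^{o(1)}$, is absorbed. The Weibull piece gives $e^{-bV\log V}$ at $r=0$, and for $r \geq 1$ the inequality $2^r V\log(2^r V) \geq 2V\log V + r\cdot 2^r V\log 2$ makes the exponential savings easily dominate the weight $e^{r\log\log\log T}$ once $V$ exceeds a suitable $V_0(\varepsilon,\varphi)$, so this piece is $\ll e^{-c_\varepsilon V\log V}$. The Gaussian piece is the crux: I want each summand $\exp\!\bigl(r\log\log\log T - (1-3\varepsilon')4^r V^2/\log\log T\bigr)$ to be bounded by $e^{(\log\log\log T)^3}\,e^{-(1-\varepsilon)V^2/\log\log T}$. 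Dividing through and using $3\varepsilon' \leq \varepsilon$ reduces this to
\[
g(r) := r\log\log\log T - (1-3\varepsilon')(4^r-1)V^2/\log\log T \leq (\log\log\log T)^3 - O(\log\log\log T)
\]
uniformly on $\{0,1,\dots,p-1\}$, with the correction absorbing the $p$ summed terms. At $r=0$ this is immediate; for $r \geq 1$, extending $g$ to real argument and optimising, the maximum occurs at $4^{r^\ast} \asymp (\log\log T)(\log\log\log T)/V^2$, where a direct computation gives $g(r^\ast) \ll (\log\log\log T)^2$, safely below the $(\log\log\log T)^3$ budget.

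The main obstacle is precisely this balancing in the Gaussian piece: the iteration in Proposition \ref{sumV} generates weights $(1+\log\log T)^r$ which are a priori very large, but the $4^r$ growth in the exponent of the tail bound for $I$ dominates them after optimisation, leaving a residual of only $(\log\log\log T)^2$ that the $(\log\log\log T)^3$ factor readily absorbs. Summing the three pieces yields the proposition, and the main theorem then follows by integrating the tail bound against $2k\,e^{2kV}\,dV$ and splitting appropriately between the two terms of the bound.
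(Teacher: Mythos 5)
Your proposal is correct and follows the same overall route as the paper: substitute $(V,H)\mapsto(2^rV,2^{-r}H)$ into the preceding tail estimate for $I(UT,\cdot)$, plug into Proposition \ref{sumV}, and dispose of the three resulting sums. For the Weibull and $T^{-1/2}$ pieces your treatment matches the paper's (geometric decay per step, and $p\ll\log\log T$ with $V\ll(\log\log T)^{-1}\log T$ respectively). The one place where you genuinely diverge is the Gaussian piece: the paper splits the sum at $r=10\log\log\log T$, bounds the low-$r$ block crudely by $(1+\log\log T)^{10\log\log\log T}\ll e^{(\log\log\log T)^3}$, and shows the high-$r$ block is dominated by its first term because $(1-3\varepsilon)(2^rV)^2/\log\log T\ge 2^{2r}\ge(\log\log T)^{13}$; you instead extend $g(r)=r\log\log\log T-(1-3\varepsilon')(4^r-1)V^2/\log\log T$ to real $r$, locate the critical point $4^{r^\ast}\asymp(\log\log T)(\log\log\log T)/V^2$ (which under $V\ge 10\sqrt{\log\log T}$ forces $r^\ast\ll\log\log\log\log T$ and hence $g(r^\ast)\ll(\log\log\log T)^2$), and sum $p$ such terms. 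Both arguments exploit the same fact, namely that the $4^r$ growth in the exponent overwhelms the $(1+\log\log T)^r$ iteration weight; your optimisation gives a marginally tighter residual $(\log\log\log T)^2$ versus the paper's $(\log\log\log T)^3$ block bound, but the paper's cutoff is arguably a bit cleaner to verify rigorously (no need to argue about the location of $r^\ast$ or handle the case $r^\ast<1$ separately, a point your sketch glosses over but which does work out). One small gap: your ``trivial range'' discussion only covers $V\le K$ and $V\ge\log T$; you should also note, as the paper does, that for $K<V<10\sqrt{\log\log T}$ the right-hand side is $\gg_\varepsilon 1$ (the Gaussian factor is bounded below and the $e^{(\log\log\log T)^3}$ prefactor is $\ge 1$), so only $V\ge 10\sqrt{\log\log T}$ needs the substantive argument; this lower bound on $V$ is what makes $r^\ast$ small in your optimisation and $2^{2r}\ge(\log\log T)^{13}$ in the paper's cutoff, so it is load-bearing and should be made explicit.
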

\begin{proof}
We fix a function $\varphi$ satisfying the assumptions given at the beginning: this function will be considered as universal, and then we will drop all the dependences on $\varphi$ in this proof. 
From Theorem 14.13 of Titchmarsh \cite{Tit}, we can assume $V \ll (\log \log T)^{-1} \log T$ and then $V < \log T$ for $T$ large (otherwise the probability is zero). 
We can then  also assume $T$ larger than any given quantity depending only on $\varepsilon$ (if $T$ is small, $V$ is small), and $V \geq 10 \sqrt{\log \log T}$. Under these assumptions, we can suppose $V > K$  if $K > 0$ depends only on $\varepsilon$, which allows to apply Proposition \ref{sumV}. 
The error term $\mathcal{O}_{\varepsilon}(T^{-1/3})$ can be absorbed in $\mathcal{O}_{\varepsilon}( e^{-c_{\varepsilon} V \log V})$ since $V \ll (\log \log T)^{-1} \log T$. The sum in $r$ is, by the previous proposition, dominated by 
\begin{align*}
\sum_{r = 0}^{p-1} (1 + \log \log T)^{r}    e^{- (1-3 \varepsilon)  (2^r V)^2/ \log \log T}
&  + \sum_{r = 0}^{p-1} (1 + \log \log T)^{r} e^{- b_{\varepsilon} (2^r V) \log (2^r V)}
\\ &  +  T^{-1/2} \sum_{r = 0}^{p-1} (1 + \log \log T)^{r},
\end{align*} 
where $b_{\varepsilon} > 0$ depends only on $\varepsilon$. 
 We can assume $V$ large, and then the exponent in the last exponential decreases by at least $b_{\varepsilon} V$ when $r$ increases by $1$, and then by more than $\log ( 2( 1+ \log \log T))$ when $T$ is large enough depending on $\varepsilon$, since $V \geq 10 \sqrt{ \log \log T}$. Hence, each term of the sum is less than half the previous one and the sum is dominated by its first term. This is absorbed in $\mathcal{O}_{\varepsilon}( e^{-c_{\varepsilon} V \log V})$.

For the last sum, we observe that $2^{p-1}  V < \log T$ by definition of $p$, and then (since we can assume $V > 1$), 
$p \ll \log \log T$, which gives a term
$$\ll T^{-1/2} ( \log \log T) (1 + \log \log T)^{ \mathcal{O} (\log \log T)} \ll T^{-1/3},$$
which can again be absorbed in  $\mathcal{O}_{\varepsilon}( e^{-c_{\varepsilon} V \log V})$ since we can assume 
$V  \ll (\log \log T)^{-1} \log T$. 

For the first sum, we separate the terms for $r \leq 10  \log \log \log T$, and for $r > 10 \log \log \log T$. 
For $T$ large, the sum of the terms for $r$ small is at most 
\begin{align*} & \sum_{r = 0}^{\lfloor 10 \log \log \log T \rfloor} ( 1 + \log \log T)^{r} 
e^{-(1-3 \varepsilon) V^2/ \log \log T} 
\\ & \ll (1 + 10 \log \log \log T) e^{ 10 \log \log \log T \log ( 1+ \log \log T)} 
e^{-(1-3 \varepsilon) V^2/ \log \log T}
\\ &   \ll 
 e^{(\log \log \log T)^3} e^{- (1- 3 \varepsilon) V^2/ \log \log T}
\end{align*}
This term is acceptable after changing the value of $\varepsilon$. 
When $r > 10 \log \log \log T$, we have (for $T$ large, $V \geq 10 \sqrt{\log \log T}$ and $\varepsilon < 1/10$)
$$(1-3\varepsilon) (2^r V)^2 / \log \log  T \geq 2^{2r}
> e^{ 20 (\log 2) \log \log \log T} 
\geq (\log \log T)^{13}.$$
The exponent is multiplied by $4$ when $r$ increases by $1$, and then decreased by more than $3 (\log \log T)^{13}$, whereas the prefactor is multiplied by $1 + \log \log T$. Hence, the term $r = \lfloor 10 \log \log \log T \rfloor + 1$
dominates the sum of all the terms $r > 10 \log \log \log T$, and its order of magnitude is acceptable.

\end{proof}
\bibliographystyle{plain}

\begin{thebibliography}{10}

\bibitem{Harper}
A.~J. Harper.
\newblock {Sharp conditional bounds for moments of the Riemann zeta function}.
\newblock {\em Preprint}, 2013.

\bibitem{HRS19}
W. Heap, M.~Radziwi\l\l~\ and K.~Soundararajan.
\newblock Sharp upper bounds for fractional moments of the {R}iemann zeta function. 
\newblock {\em The Quarterly Journal of Mathematics}, 70(4):1387--1396, 2019. 

\bibitem{HB81}
D.~R. Heath-Brown.
\newblock Fractional moments of the {R}iemann zeta function.
\newblock {\em J. London Math. Soc. (2)}, 24(1):65--78, 1981.

\bibitem{bib:KSn}
J.-P. Keating and N.~Snaith.
\newblock {Random Matrix Theory and $\zeta(1/2 + it)$ }.
\newblock {\em Commun. Math. Physics}, 214:57--89, 2000.

\bibitem{bib:Naj}
J.~{Najnudel}.
\newblock {On the extreme values of the Riemann zeta function on random
  intervals of the critical line}.
\newblock {\em Probab. Theory Relat. Fields}, 2017.

\bibitem{RS13}
M.~Radziwi\l\l~\ and K.~Soundararajan.
\newblock Continuous lower bounds for moments of zeta and {$L$}-functions.
\newblock {\em Mathematika}, 59(1):119--128, 2013.

\bibitem{Ra78}
K.~Ramachandra.
\newblock Some remarks on the mean value of the {R}iemann zeta function and
  other {D}irichlet series. {I}.
\newblock {\em Hardy-Ramanujan J.}, 1:15, 1978.

\bibitem{Ra95}
K.~Ramachandra.
\newblock {\em On the mean-value and omega-theorems for the {R}iemann
  zeta-function}, volume~85 of {\em Tata Institute of Fundamental Research
  Lectures on Mathematics and Physics}.
\newblock Published for the Tata Institute of Fundamental Research, Bombay; by
  Springer-Verlag, Berlin, 1995.

\bibitem{RS05}
Z.~Rudnick, K.~Soundararajan.
\newblock Lower bounds for moments of $L$-functions.
\newblock {\em Proc. Nat. Acad. Sci.}, 102(19):6837--6838, 2005.

\bibitem{Sound}
K.~Soundararajan.
\newblock Moments of the {R}iemann zeta function.
\newblock {\em Ann. of Math. (2)}, 170(2):981--993, 2009.

\bibitem{Tit}
E.-C. Titchmarsh.
\newblock {\em The theory of the {R}iemann zeta-function}.
\newblock The Clarendon Press, Oxford University Press, New York, second
  edition, 1986.
\newblock Edited and with a preface by D. R. Heath-Brown.

\bibitem{Tsang84}
K.-M. Tsang.
\newblock {\em The distribution of the values of the {R}iemann zeta function}.
\newblock ProQuest LLC, Ann Arbor, MI, 1984.
\newblock Thesis (Ph.D.)--Princeton University.

\bibitem{Tsang86}
K.-M. Tsang.
\newblock Some {$\Omega$}-theorems for the {R}iemann zeta-function.
\newblock {\em Acta Arith.}, 46(4):369--395, 1986.

\end{thebibliography}

\end{document}